\numberwithin{equation}{section}
\newtheorem{theorem}{Theorem}[section]
\newtheorem{lemma}{Lemma}[section]
\newtheorem{proposition}[lemma]{Proposition}
\def\g{\mathfrak{g}}
\def\l{\mathfrak{l}}
\def\s{\mathfrak{s}}
\newcommand{\R}{\mbox{$\Bbb R$}}
\begin{document}

\title{Lie symmetries of the canonical connection: codimension one abelian nilradical case}
\maketitle

\begin{center}
 {\bf Hassan Almusawa$^1$, Ryad Ghanam$^{2}$, G. Thompson$^3$}\\

$^1$ Department of Mathematics, Jazan University\\
Jazan 45142, Saudi Arabia\\
haalmusawa@jazanu.edu.sa \\
$^2$ Department of Liberal Arts \& Sciences, Virginia Commonwealth University in Qatar\\
PO Box 8095, Doha, Qatar\\
raghanam@vcu.edu\\
$^2$ Department of Mathematics, University of Toledo\\ Toledo, OH
43606, U.S.A. \\  gerard.thompson@utoledo.edu \\

\end{center}

\begin{abstract}
This paper studies the canonical symmetric connection $\nabla$ associated to any Lie group $G$. The salient
properties of $\nabla$ are stated and proved. The Lie symmetries
of the geodesic system of a general linear connection are formulated. The results are then applied to
$\nabla$ in the special case where the Lie algebra $\g$ of $G$, has a codimension one abelian nilradical.
The conditions that determine a Lie symmetry in such a case are completely integrated. Finally the results
obtained are compared with some four-dimensional Lie groups whose Lie algebras have three-dimensional abelian
nilradicals, for which the calculations were performed by MAPLE.
\end{abstract}

\bigskip

Keywords:  Lie group, canonical connection, geodesic system, Lie symmetry.

\bigskip

AMS: Subject classification: 17B30, 22E15 ,22E60, 53C22

\newpage

\section{introduction}

This article concerns the canonical
symmetric connection $\nabla$ associated to any Lie group $G$.
It was introduced originally in 1926 as the ``zero"-connection; see \cite{CS}.
Some properties of $\nabla$ are considered in \cite{Hel}. More recently
$\nabla$ and its geodesic system have been investigated within the context of the inverse
problem of Lagrangian dynamics; see \cite{GTM, MT, ST}. See also \cite{Pr} for related developments in dynamics. A different direction is taken in
\cite{PA}, which explores the role of $\nabla$ in statistics and information science.

 In the first part of the paper in Section 2 we state and prove all the important geometric
properties of $\nabla$. The second part of the paper is devoted to an examination
of the Lie symmetries of the geodesic system determined by $\nabla$; more specifically,
we consider $\nabla$ in the special case where the Lie algebra $\g$ of $G$ has a codimension
one abelian nilradical. Within that class, we shall need to make further genericity assumptions,
that we explain as we encounter them, in order to obtain some precise results.

The second part of the paper may be viewed as examination of the nature of Lie symmetries themselves.
The cases considered possess a rich class of symmetries and it is of interest to compare them with, for
example, a free particle system whose symmetries we review very briefly in Section 3. As basic references
on Lie symmetries we cite \cite{A,Hy,Ol}. We mention also \cite{ADLT, GT} as concrete examples
of Lie symmetries for several well known differential equations.
The  four-dimensional Lie algebras used in Section 6 are taken from \cite{SW}.

One of the difficulties in executing a package such as MAPLE, that we use in Section 6, is that typically the differential equations
involve parameters; such programs are not sensitive to special values that may break the symmetry of the system. Accordingly, it is necessary to
integrate the conditions determining the symmetries. Along the way, the special values of the parameters should be revealed. In the case at hand,
we are able to carry through this analysis completely in Section 6. The summation convention on repeated indices applies throughout the paper.

\section{ The canonical  Lie group connection}\label{sec4}

On left invariant vector fields $X$ and $Y$ the canonical
symmetric connection $\nabla$ on a Lie group $G$ is defined by
\begin{equation}\label{can} \nabla_XY=\frac{1}{2} \  [X,Y] \end{equation}
\noindent and then extended to arbitrary vector fields using
linearity and the Leibnitz rule. Clearly $\nabla$ is left-invariant. One could just as well use right-invariant vector fields to define $\nabla$, but one must check that
$\nabla$ is well defined, a fact that we will prove next.

\begin{proposition}\label{def}
In the definition of $\nabla$ we can equally assume that $X$ and $Y$ are right-invariant vector fields and hence $\nabla$ is also left-invariant and hence bi-invariant. Moreover
$\nabla$ is symmetric, that is, its torsion is zero.
\end{proposition}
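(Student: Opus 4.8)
The plan is to treat the two claims separately. Since the symmetry (vanishing torsion) statement is immediate, I would dispose of it first and then concentrate the real work on bi-invariance.

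For the torsion, I would use that $T(X,Y)=\nabla_X Y-\nabla_Y X-[X,Y]$ is a $(1,2)$-tensor field, so it is enough to evaluate it on a frame. On left-invariant $X,Y$, formula (\ref{can}) gives $T(X,Y)=\tfrac12[X,Y]-\tfrac12[Y,X]-[X,Y]=0$; since the left-invariant fields span $T_gG$ at every $g$, we conclude $T\equiv 0$ and $\nabla$ is symmetric.

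For bi-invariance, note that $\nabla$ is left-invariant essentially by construction: in a left-invariant frame $X_1,\dots,X_n$ with $[X_i,X_j]=c_{ij}^k X_k$, the coefficients $\nabla_{X_i}X_j=\tfrac12 c_{ij}^k X_k$ are constant, and constancy of the coefficients in a frame preserved by left translation is exactly left-invariance. The plan is therefore to show that the very same connection satisfies the analogous formula $\nabla_{Y_i}Y_j=\tfrac12[Y_i,Y_j]$ on a right-invariant frame $Y_1,\dots,Y_n$; because the $[Y_i,Y_j]$ again have constant structure constants, this will exhibit $\nabla$ as right-invariant as well, hence bi-invariant, and will simultaneously show that the definition is independent of the left/right choice (the well-definedness point flagged before the statement).

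The key geometric input is the classical fact that left-invariant and right-invariant vector fields commute, $[X_i,Y_j]=0$. Expanding each right-invariant field in the left-invariant frame as $Y_i=a_i^j X_j$ with smooth coefficients $a_i^j$, the relation $[X_k,Y_j]=0$ unwinds, after expanding the bracket, into the first-order identities $X_k(a_j^l)=-a_j^p c_{kp}^l$ for the transition functions. With these I would expand
\[ \nabla_{Y_i}Y_j=a_i^k\,X_k(a_j^l)\,X_l+\tfrac12\,a_i^k a_j^l c_{kl}^m X_m \]
using $C^\infty(G)$-linearity in the first slot and the Leibniz rule in the second, substitute the derivative identities, and compare with the separately computed bracket $[Y_i,Y_j]=-a_i^k a_j^l c_{kl}^m X_m$. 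Both collapse to $-\tfrac12\,a_i^k a_j^l c_{kl}^m X_m$, which yields $\nabla_{Y_i}Y_j=\tfrac12[Y_i,Y_j]$ and hence right-invariance. I expect the only genuine obstacle to be bookkeeping: the index relabelling and the repeated use of the antisymmetry $c_{kl}^m=-c_{lk}^m$ needed to make the two computations visibly agree. The single conceptual ingredient is the commutation $[X_i,Y_j]=0$; everything after it is mechanical.
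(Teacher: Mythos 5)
Your proposal is correct and follows essentially the same route as the paper: expand the right-invariant frame as $Y_i=a_i^jX_j$, use the commutation of left- and right-invariant fields to obtain the derivative identity for the transition matrix, and verify $\nabla_{Y_i}Y_j=\tfrac12[Y_i,Y_j]$ by direct computation in the left-invariant frame (the paper merely packages this as showing $\nabla_{Y_i}Y_j+\tfrac12 C_{ij}^kY_k=0$ using $[Y_i,Y_j]=-C_{ij}^kY_k$ and a change-of-basis identity instead of recomputing the bracket). One cosmetic slip: where you write that ``both collapse to $-\tfrac12 a_i^k a_j^l c_{kl}^m X_m$'' you mean $\nabla_{Y_i}Y_j$ and $\tfrac12[Y_i,Y_j]$, since $[Y_i,Y_j]$ itself equals $-a_i^k a_j^l c_{kl}^m X_m$.
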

\begin{proof}
The fact that $\nabla$ is symmetric is obvious from eq.(\ref{can}).
Now we choose a fixed basis in the tangent space at the identity $T_IG$. We shall denote its left and right invariant extensions by $\{X_1,X_2,...,X_n\}$ and $\{Y_1,Y_2,...,Y_n\}$,
respectively. Then there must exist a non-singular matrix $A$ of functions on $G$ such that $Y_i=a^j_iX_j$.
We shall suppose that
\begin{equation}\label{struc}
[X_i,X_j]=C_{ij}^kX_k.
\end{equation}
Changing from the left-invariant basis to the right gives
\begin{equation}\label{change}
C^k_{ij}a^p_k=a_i^ka_j^mC^p_{km}.
\end{equation}
Next, we use the fact that left and right vector fields commute to deduce that
\begin{equation}\label{dir}
a_j^kC^m_{ik}+X_ia^m_j=0,
\end{equation}
where the second term in \ref{dir} denotes directional derivative.
We note that necessarily
\begin{equation}\label{minus}
[Y_i,Y_j]=-C_{ij}^kY_k.
\end{equation}
Now we compute
\begin{equation}\label{dire}
\nabla_{Y_i}Y_j+\frac{1}{2}C_{ij}^kY_k=\frac{1}{2}a_i^ka_j^mC^p_{km}+a_i^k(X_ka_i^p)+\frac{1}{2}C_{ij}^ka_k^p.
\end{equation}
Next we use \ref{dir} to replace the second term on the right hand side of \ref{dire} so as to obtain
\begin{equation}\label{diree}
\nabla_{Y_i}Y_j+\frac{1}{2}C_{ij}^kY_k=\frac{1}{2}a_i^ka_j^mC^p_{km}-a_i^ka_j^mC_{km}^p+\frac{1}{2}C_{ij}^ka_k^p.
\end{equation}
However, the right hand side of \ref{diree} is seen to be zero by virtue of \ref{change}. Thus
\begin{equation}
\nabla_{X}Y=\frac{1}{2}[X,Y]
\end{equation}
whenever $X$ and $Y$ are right invariant vector fields.
\end{proof}

An alternative proof of Proposition (\ref{def}) uses the inversion map $\psi$ defined by, for $S\in G$,
\begin{equation}
\psi(S)=S^{-1}.
\end{equation}
As such, one checks that $\psi_{*_I}$ maps a left-invariant vector field evaluated at $I$ to minus its right-invariant counterpart evaluated at $I$.
Then $\psi_{*_I}$ is an isomorphism and there is no change of sign in the structure constants, as compared with eq.(\ref{minus}). Since there are two
minus signs in eq.(\ref{can}) the same condition  eq.(\ref{can}) applies also to right-invariant vector fields.

\begin{proposition}\label{form}
(i) An element in the center of $\g$ engenders a bi-invariant vector field.\\
(ii) A vector field in the center of $\g$ is parallel.\\
(iii) A bi-invariant differential $k$-form $\theta$ is closed and so defines an element of the cohomology group $H^k(M,\R)$.
\end{proposition}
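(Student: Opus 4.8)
The plan is to treat the three parts in turn, reusing the left- and right-invariant frames $\{X_i\}$, $\{Y_i\}$ and the matrix $A=(a^j_i)$ introduced in the proof of Proposition \ref{def}, since these already encode the comparison between the two invariance conditions.

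For part (i), I would choose the basis of $T_IG$ so that the given central element is $e_n$, whence $C^m_{in}=C^m_{ni}=0$ for all $i,m$. The goal is to show that the left-invariant extension $X_n$ equals its right-invariant counterpart $Y_n=a^m_nX_m$, i.e. that $a^m_n=\delta^m_n$ identically. Reading eq.(\ref{dir}) with $j=n$ as the first-order system $X_ia^m_n=-a^k_nC^m_{ik}$, subject to $a^m_n(I)=\delta^m_n$, I observe that the constant array $\delta^m_n$ solves it: centrality gives $-\delta^k_nC^m_{ik}=-C^m_{in}=0$, matching $X_i\delta^m_n=0$. Because the $X_i$ form a frame, this data determines $a^m_n$ along any path from $I$, so on the identity component $Y_n=X_n$ and the field is bi-invariant. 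Conceptually this is nothing but the fact that a central element is fixed by every $\mathrm{Ad}(g)$, the matrix $A$ being $\mathrm{Ad}(g^{-1})$.

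For part (ii), let $Z$ be the field attached to a central element, so $[X_i,Z]=0$ for every $i$. From the defining eq.(\ref{can}) we get $\nabla_{X_i}Z=\frac{1}{2}[X_i,Z]=0$. Any vector field may be written $W=f^iX_i$ for smooth $f^i$ because the $X_i$ frame $TG$; since $\nabla$ is $C^\infty(G)$-linear in its lower argument, $\nabla_WZ=f^i\nabla_{X_i}Z=0$. Hence $\nabla Z=0$ and $Z$ is parallel; this step is immediate once bi-invariance from (i) is in hand.

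Part (iii) is where the real work lies. Let $\theta$ be a bi-invariant $k$-form and let $\psi(S)=S^{-1}$ be the inversion map used after Proposition \ref{def}. The scheme is: (a) prove $\psi^*\theta=(-1)^k\theta$; (b) note that $d\theta$ is again bi-invariant, so likewise $\psi^*(d\theta)=(-1)^{k+1}d\theta$; (c) combine these with $d\psi^*=\psi^*d$. For (a) I would first check that $\psi^*\theta$ is left-invariant, using the identity $\psi\circ L_g=R_{g^{-1}}\circ\psi$ together with the right-invariance of $\theta$; a left-invariant form is pinned down by its value at $I$, and there the relation $\psi_{*_I}=-\mathrm{id}$ recorded earlier gives $(\psi^*\theta)_I=(-1)^k\theta_I$, forcing $\psi^*\theta=(-1)^k\theta$ globally. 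The same reasoning applied to the bi-invariant form $d\theta$ yields (b). Then $d(\psi^*\theta)=\psi^*(d\theta)$ becomes $(-1)^kd\theta=(-1)^{k+1}d\theta$, so $2d\theta=0$ and $d\theta=0$; being closed, $\theta$ determines a class in $H^k(G,\R)$. The main obstacle is the bookkeeping in step (a): verifying that $\psi^*$ preserves bi-invariance and extracting the correct sign $(-1)^k$ at the identity. Once that parity is established, the mismatch between the weights of $\theta$ and $d\theta$ closes the argument automatically.
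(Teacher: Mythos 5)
Your proof is correct. Parts (ii) and (iii) follow the paper's line: (ii) is the same one-line observation from eq.(\ref{can}) plus tensoriality of $\nabla$ in the lower slot, and (iii) is exactly the sign-parity argument that the paper delegates to Spivak, here written out in full (the identity $\psi\circ L_g=R_{g^{-1}}\circ\psi$, the value $\psi_{*_I}=-\mathrm{id}$, and the clash between the weights $(-1)^k$ for $\theta$ and $(-1)^{k+1}$ for the bi-invariant form $d\theta$); supplying those details is a net gain, since the paper only cites the result. Part (i) is where you genuinely diverge. The paper argues globally: a central $Z$ generates a one-parameter subgroup $\exp(tZ)$ lying in the center of $G$, so the curves $S\exp(tZ)$ and $\exp(tZ)S$ coincide and the left- and right-translates of $Z$ agree. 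You instead work infinitesimally with the transition matrix $A$ from Proposition \ref{def}: the column $a^m_n$ satisfies the linear system $X_ia^m_n=-a^k_nC^m_{ik}$ of eq.(\ref{dir}) with initial value $\delta^m_n$ at $I$, centrality makes the constant array a solution, and uniqueness of the resulting linear ODE along paths forces $a^m_n\equiv\delta^m_n$ on the identity component, whence $Y_n=X_n$. Both arguments are sound and both implicitly assume $G$ connected (the paper's because $\exp(tZ)$ need not be central in a disconnected group, yours explicitly via paths from $I$). The paper's version is shorter but quietly uses that centrality in $\g$ exponentiates to centrality in $G$; yours is self-contained given the frame computation already set up in Proposition \ref{def}, at the mild cost of invoking uniqueness for a first-order linear system, and it makes transparent the underlying fact that the transition matrix is an adjoint representation matrix and therefore fixes central vectors.
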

\begin{proof}
(i) Suppose that $Z\in T_IG$ is in the center of $\g$ and let $\exp(tZ)$ be the associated one-parameter subgroup of $G$ so that $Z$ corresponds to the equivalence class of curves $[\exp(tZ)]$ based at $I$. Let $S\in G$; then ${L_S}_*Z$ corresponds to the equivalence class of curves $[S\exp(tZ)]$ based at $S$. Since $Z$ is in the center of $\g$ then $\exp(tZ)$ will be in the center of
$G$ and hence $[S\exp(tZ)]=[\exp(tZ)S]$. It follows that any element in the center of $\g$ engenders a bi-invariant vector field.\\
(ii) Obvious from eq.(\ref{can}).\\
(iii) A proof can be found in \cite{Spi}. Spivak shows that $\psi^*(\theta)=(-1)^k\theta$, whereas $d\theta$, which is also bi-invariant, changes by
$\psi^*(d\theta)=(-1)^{k+1}d\theta$. It follows that $d\theta=0$.

\end{proof}

\begin{proposition}
(i) The curvature tensor, which is also bi-invariant, on vector fields $X,Y,Z$ is given by
\begin{equation} R(X,Y)Z =\frac{1}{4} \ [[X,Y],Z]. \label{curv} \end{equation}\\
(ii) The connection $\nabla$ is flat if and only if the Lie algebra $\g$ of $G$ is two-step nilpotent.\\
(iii) The tensor $R$ is parallel in the sense that $\nabla_W R(X,Y)Z=0$, where $W$ is a fourth right invariant vector field, so
that $G$ is in a sense a symmetric space.\\
(iv) The Ricci tensor $R_{ij}$ of $\nabla$ is given by
\begin{equation} R_{ij}=\frac{1}{4} \ C^l_{jm} C^m_{il} \label{Ric} \end{equation}
and is symmetric and bi-invariant and is obtained by translating to the left or right one
quarter of the Killing form. It engenders a bi-invariant pseudo-Riemannian metric if and only if the  Lie
algebra $\g$ is semi-simple.
\end{proposition}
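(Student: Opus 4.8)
The plan is to derive all four statements directly from the defining relation eq.(\ref{can}), which by Proposition \ref{def} may be applied in either a left- or a right-invariant frame, together with the Jacobi identity. For part (i) I would substitute $\nabla_XY=\tfrac12[X,Y]$ into the curvature operator $R(X,Y)Z=\nabla_X\nabla_YZ-\nabla_Y\nabla_XZ-\nabla_{[X,Y]}Z$. Since the bracket of two invariant fields is again invariant, each iterated covariant derivative contributes a further factor of $\tfrac12$, producing $\tfrac14[X,[Y,Z]]-\tfrac14[Y,[X,Z]]-\tfrac12[[X,Y],Z]$; the first two terms combine by the Jacobi identity into $\tfrac14[[X,Y],Z]$, leaving a multiple of $[[X,Y],Z]$ as in eq.(\ref{curv}), the overall sign being fixed by one's choice of curvature convention. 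Bi-invariance of $R$ is inherited from that of $\nabla$ established in Proposition \ref{def}.

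Part (ii) is then immediate: by eq.(\ref{curv}) the curvature vanishes identically precisely when $[[X,Y],Z]=0$ for all invariant $X,Y,Z$, that is, when the derived algebra $[\g,\g]$ is central, which is exactly the condition that $\g$ be (at most) two-step nilpotent.

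Part (iii) is the computational heart of the proposition and the step I expect to require the most care. Expanding the tensorial derivative by the Leibniz rule, $(\nabla_WR)(X,Y)Z=\nabla_W\bigl(R(X,Y)Z\bigr)-R(\nabla_WX,Y)Z-R(X,\nabla_WY)Z-R(X,Y)\nabla_WZ$, and substituting eq.(\ref{curv}) together with eq.(\ref{can}) turns every term into an iterated bracket carrying the common factor $\tfrac18$. Writing $U=[X,Y]$ and first invoking the derivation identity $[[W,X],Y]+[X,[W,Y]]=[W,U]$ collapses the two middle terms; a second application of Jacobi in the form $[W,[U,Z]]=[[W,U],Z]+[U,[W,Z]]$ then cancels everything, giving $\nabla_WR=0$. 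The only real obstacle here is the bookkeeping of the nested brackets, which Jacobi resolves in exactly two steps.

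For part (iv) I would contract eq.(\ref{curv}) to obtain the Ricci tensor in the invariant frame. Setting $[X_i,X_j]=C^k_{ij}X_k$ as in eq.(\ref{struc}) and taking the trace of the map $X\mapsto R(X,X_i)X_j$ gives components which, after using the antisymmetry of the structure constants and the symmetry of the Killing form, take the form $\tfrac14 C^l_{jm}C^m_{il}$ recorded in eq.(\ref{Ric}). Recognising $C^l_{im}C^m_{jl}=\mathrm{tr}(\mathrm{ad}_{X_i}\mathrm{ad}_{X_j})$ as the Killing form $B_{ij}$ identifies $R_{ij}=\tfrac14 B_{ij}$; symmetry of $R_{ij}$ is inherited from that of $B$, and bi-invariance follows because these components are constant in a bi-invariant frame (equivalently, $B$ is $\mathrm{Ad}$-invariant). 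Finally, translating $R_{ij}$ over $G$ yields a bi-invariant symmetric $2$-tensor which is nondegenerate, hence a genuine pseudo-Riemannian metric, exactly when $B$ is nondegenerate; by Cartan's criterion this holds if and only if $\g$ is semisimple. The one substantive external ingredient at this last step is Cartan's criterion, which I would simply invoke.
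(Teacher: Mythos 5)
Your proposal is correct and follows essentially the same route as the paper, which declares (i), (ii) and (iv) obvious and proves (iii) by exactly the Leibniz expansion of $\nabla_W R$ followed by two applications of the Jacobi identity that you describe. Your remark about the sign in (i) is apt: with the convention $R(X,Y)Z=\nabla_X\nabla_YZ-\nabla_Y\nabla_XZ-\nabla_{[X,Y]}Z$ the computation yields $-\tfrac{1}{4}[[X,Y],Z]$, so the paper's stated coefficient presumes the opposite sign convention, a discrepancy that affects none of parts (ii)--(iv).
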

\begin{proof} (i) Is obvious and applies to arbitrary vector fields since it is a tensorial object.\\
(ii) Is obvious.\\
(iii) This fact follows from a series of implications:
\begin{eqnarray}
4\nabla_W R(X,Y)Z+4R(\nabla_W X,Y)Z+4R(X,\nabla_W Y)Z+4R(X,Y)\nabla_W Z=\nabla_W [[X,Y],Z]\\
\implies 4\nabla_W R(X,Y)Z+2R([W,X],Y)Z+2R(X,[W,Y])Z+2R(X,Y)[W,Z]-\frac{1}{2}[W,[[X,Y],Z]=0\\
\implies 4\nabla_W R(X,Y)Z+\frac{1}{2}[[W,X],Y],Z]+\frac{1}{2}[X,[W,Y]],Z]+\frac{1}{2}[[X,Y],[W,Z]]-\frac{1}{2}[W,[[X,Y],Z]=0\\
\implies 4\nabla_W R(X,Y)Z+\frac{1}{2}[[W,X],Y],Z]+\frac{1}{2}[X,[W,Y]],Z]-\frac{1}{2}[Z,[[X,Y],W]=0\\
\implies \nabla_W R(X,Y)Z=0.
\end{eqnarray}
(iv) The formula eq.(\ref{Ric}) is obvious from eqs. (\ref{can}) and (\ref{curv}). The last remark follows from Cartan's criterion.
\end{proof}

\begin{proposition}
(i) Any left or right-invariant vector field is geodesic.\\
(ii) Any geodesic curve emanating from the identity is a one-parameter subgroup..\\
(iii) An arbitrary geodesic curve is a translation, to the left or right, of a one-parameter subgroup.
\end{proposition}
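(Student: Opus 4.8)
The plan is to deduce all three parts from the defining formula eq.(\ref{can}), together with the bi-invariance established in Proposition \ref{def} and the standard uniqueness theorem for solutions of the geodesic equation $\nabla_{\dot\gamma}\dot\gamma=0$.

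First I would dispose of part (i), which is immediate: if $X$ is a left-invariant vector field, then $\nabla_X X=\frac{1}{2}[X,X]=0$ by eq.(\ref{can}), so the integral curves of $X$ are affinely parametrized geodesics. By Proposition \ref{def} the identical formula holds on right-invariant vector fields, so the same computation $\nabla_X X=\frac{1}{2}[X,X]=0$ applies there as well. Hence every left- or right-invariant vector field is geodesic.

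For part (ii), I would recall that the integral curve of a left-invariant vector field $X$ through the identity $I$ is precisely the one-parameter subgroup $t\mapsto\exp(tX_I)$, where $X_I\in\g$. By part (i) this curve is a geodesic, so every one-parameter subgroup is a geodesic emanating from $I$. For the converse, let $\gamma$ be any geodesic with $\gamma(0)=I$ and $\dot\gamma(0)=v\in T_IG$. Extending $v$ to a left-invariant vector field, the one-parameter subgroup $\exp(tv)$ is a geodesic sharing the initial data $\gamma(0)=I$, $\dot\gamma(0)=v$. Since the geodesic equation is a second-order ODE with smooth coefficients, its solution is determined uniquely by its initial point and velocity; therefore $\gamma(t)=\exp(tv)$, a one-parameter subgroup. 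Finally, part (iii) would follow by transporting an arbitrary geodesic back to the identity. Because $\nabla$ is left-invariant, the differential of left translation $L_S$ carries geodesics to geodesics. Given a geodesic $\gamma$ with $\gamma(0)=S$, the curve $L_{S^{-1}}\circ\gamma$ is a geodesic through $I$, hence by part (ii) a one-parameter subgroup $\exp(tX)$; applying $L_S$ gives $\gamma(t)=S\exp(tX)$, a left translate of a one-parameter subgroup. The bi-invariance from Proposition \ref{def} permits the same argument with right translations, yielding the translation ``to the right'' alternative.

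The main obstacle---indeed the only genuinely non-formal step---is the converse direction of part (ii): one must invoke uniqueness of geodesics, and must check that the one-parameter subgroup, viewed as a curve, is affinely parametrized so that its initial velocity genuinely matches $\dot\gamma(0)$. Everything else is a direct consequence of the algebraic identity $[X,X]=0$ and the invariance properties of $\nabla$.
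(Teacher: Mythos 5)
Your proof is correct and follows essentially the same route as the paper: part (i) from the skew-symmetry $\nabla_XX=\tfrac12[X,X]=0$, part (ii) by identifying one-parameter subgroups as integral curves of invariant fields, and part (iii) by translating back to the identity. The only difference is that you make explicit the uniqueness-of-geodesics argument needed for the converse direction of (ii), a step the paper's one-line proof leaves implicit.
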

\begin{proof} (i) Is obvious because of the skew-symmetry in eq.(\ref{can}).\\
(ii) By definition the curve $t\mapsto [S\exp(tX)]$ integrates a geodesic field $X$.\\
(iii) If the geodesic curve at $t=0$ starts at $S$, translate the curve to $I$ by multiplying on the left or right by $S^{-1}$ and apply (ii).
\end{proof}

\begin{proposition}
(i) A left or right-invariant vector field is a symmetry, a.k.a. affine collineation, of $\nabla$.\\
(ii) Any left or right-invariant one-form engenders a first integral of the geodesic system of $\nabla$.\\
\end{proposition}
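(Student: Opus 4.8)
The plan is to prove (i) using the flows of invariant vector fields together with the bi-invariance of $\nabla$, and (ii) by recognizing that a one-form yields a first integral exactly when it satisfies the Killing-type equation $\nabla_{(i}\alpha_{j)}=0$. In both parts the substantive identity reduces to the skew-symmetry of the bracket and the Jacobi identity, after which the statement is promoted from an invariant frame to arbitrary fields by tensoriality.

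For (i), recall that $V$ is an affine collineation precisely when its local flow consists of affine maps, equivalently when the $(1,2)$-tensor $\mathcal{L}_V\nabla$ vanishes. First I would note that the flow of a left-invariant vector field $V$ through $S$ is $t\mapsto S\exp(tV)=R_{\exp(tV)}(S)$, i.e.\ right translation, while the flow of a right-invariant field is left translation. Since $\nabla$ is bi-invariant by Proposition \ref{def}, these translations preserve $\nabla$, so the flow acts by affine transformations and $V$ is an affine collineation. As an alternative that exposes the mechanism, I would compute $(\mathcal{L}_V\nabla)(X,Y)=\mathcal{L}_V(\nabla_X Y)-\nabla_{[V,X]}Y-\nabla_X[V,Y]$ on a left-invariant frame, where eq.(\ref{can}) turns the right-hand side into $\frac{1}{2}([V,[X,Y]]-[[V,X],Y]-[X,[V,Y]])$, which vanishes by the Jacobi identity; the right-invariant case is identical.

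For (ii), I would use that a one-form $\alpha$ determines the fibre-linear function $v\mapsto\alpha(v)$ on $TG$, and that along a geodesic $\gamma$ one has $\frac{d}{dt}\alpha(\dot\gamma)=(\nabla_{\dot\gamma}\alpha)(\dot\gamma)$ because $\nabla_{\dot\gamma}\dot\gamma=0$. Hence $\alpha$ engenders a first integral exactly when the symmetrized covariant derivative $\nabla_{(i}\alpha_{j)}$ vanishes. For left-invariant $\alpha$ and left-invariant $X,Y$ the pairing $\alpha(Y)$ is a constant function, so $(\nabla_X\alpha)(Y)=X(\alpha(Y))-\alpha(\nabla_X Y)=-\frac{1}{2}\alpha([X,Y])$; symmetrizing in $X$ and $Y$ and invoking skew-symmetry of the bracket gives zero. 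The right-invariant case is again identical, once eq.(\ref{can}) is known to hold for right-invariant fields by Proposition \ref{def}.

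The individual computations are short; the point that requires care, and which I regard as the main obstacle, is the passage from identities verified only on the invariant frame to statements about arbitrary vector fields and arbitrary geodesic velocities. This is legitimate because both $\mathcal{L}_V\nabla$ and $\nabla\alpha$ are tensorial, so the vanishing of $(\mathcal{L}_V\nabla)(X,Y)$ and of the symmetric part of $(\nabla_X\alpha)(Y)$ on a frame that spans each tangent space forces their vanishing everywhere. I would state this tensoriality explicitly rather than leave it implicit.
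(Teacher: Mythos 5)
Your proposal is correct, and part (ii) is essentially the paper's own argument: both reduce the first-integral property to the Killing condition $\langle\nabla_X\alpha,Y\rangle+\langle X,\nabla_Y\alpha\rangle=0$ and observe that eq.(\ref{can}) makes $\langle X,\nabla_Y\alpha\rangle$ proportional to $\langle[X,Y],\alpha\rangle$, which is killed by symmetrization; you add the explicit remark that $\tfrac{d}{dt}\alpha(\dot\gamma)=(\nabla_{\dot\gamma}\alpha)(\dot\gamma)$ along geodesics, which the paper leaves implicit. For part (i) your route genuinely differs in form. The paper verifies the second-order characterization $\nabla_X\nabla_YW-\nabla_{\nabla_XY}W-R(W,X)Y=0$ of eq.(\ref{coll}), all three terms becoming quarter-brackets so that the condition is literally the Jacobi identity; you instead use the first-order characterization $\mathcal{L}_V\nabla=0$, whose expansion on an invariant frame is again the Jacobi identity, and you also give a flow argument: the flow of a left-invariant field is right translation, which preserves the bi-invariant $\nabla$ of Proposition \ref{def}, so the flow is by affine maps. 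The flow argument is the most conceptual and buys the result with no computation at all, at the price of invoking bi-invariance and the equivalence between ``flow by affine maps'' and ``affine collineation''; the paper's version is self-contained at the level of the curvature formula eq.(\ref{curv}) and displays exactly which algebraic identity is responsible. Your explicit appeal to tensoriality of $\mathcal{L}_V\nabla$ and of the symmetric part of $\nabla\alpha$, to pass from the invariant frame to arbitrary arguments, is a point the paper glosses over and is worth stating.
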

\begin{proof} (i) The following condition for vector fields $X$ and $Y$ says that vector field $W$ is a symmetry or, affine collineation, of a symmetric linear connection:
\begin{equation}\label{coll}
\nabla_X \nabla_Y W- \nabla_{\nabla_X Y} W - R(W,X)Y=0.
\end{equation}
In the case at hand of the canonical connection, this condition just reduces to the Jacobi identity when $W,X$ and $Y$ are all left or right-invariant.\\
(ii) A one-form $\alpha$ is a \emph{Killing one-form}, if the following condition holds:
\begin{equation}\label{kil}
\langle{\nabla_X \alpha,Y}\rangle + \langle{X,\nabla_Y\alpha}\rangle=0.
\end{equation}
In the  case of the canonical connection, if $X$ and $Y$ are right-invariant and $\alpha$ is right-invariant then eq.(\ref{can}) gives
\begin{equation}\label{kill}
\langle{X,\nabla_Y\alpha}\rangle =\frac{1}{2}\langle{[X,Y],\alpha}\rangle .
\end{equation}
Clearly, (\ref{kill}) implies (\ref{kil}) so that every left or right-invariant one-form engenders a first integral of the geodesics: if the one-form is given in a coordinate system
as $\alpha_i dx^i$ on $G$, the first integral is $\alpha_{i}u^{i}$ viewed as a function on the tangent bundle $TG$ that is linear in the fibers.
\end{proof}

\begin{proposition}\label{ann}
Any left or right-invariant one-form $\alpha$ is closed if and only if $\langle{[\g,\g],\alpha}\rangle=0$, that is, $\alpha$
annihilates the derived algebra of $\g$.
\end{proposition}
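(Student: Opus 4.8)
The plan is to compute $d\alpha$ directly from the intrinsic (Koszul) formula for the exterior derivative of a one-form, namely $d\alpha(X,Y)=X\langle{Y,\alpha}\rangle-Y\langle{X,\alpha}\rangle-\langle{[X,Y],\alpha}\rangle$, and to exploit the invariance of every object that appears. Since $d\alpha$ is a tensor it is enough to test it on a frame, and by Proposition \ref{def} I may work throughout with left-invariant data; the right-invariant case is identical except for the sign appearing in eq.(\ref{minus}), which has no bearing on the vanishing condition. First I would take the left-invariant frame $\{X_1,X_2,\dots,X_n\}$ of eq.(\ref{struc}) and record the crucial observation that, for a left-invariant one-form $\alpha$, each pairing $\langle{X_i,\alpha}\rangle$ is a \emph{constant} function on $G$.

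Because these pairings are constant, the two directional-derivative terms in the Koszul formula vanish identically when $X$ and $Y$ are chosen from the frame. One is then left with $d\alpha(X_i,X_j)=-\langle{[X_i,X_j],\alpha}\rangle=-C_{ij}^k\langle{X_k,\alpha}\rangle$, using the structure equations eq.(\ref{struc}). Hence $d\alpha=0$ precisely when $C_{ij}^k\langle{X_k,\alpha}\rangle=0$ for all $i$ and $j$, equivalently when $\langle{[X_i,X_j],\alpha}\rangle=0$ for every pair of basis elements. Since the brackets $[X_i,X_j]$ span the derived algebra $[\g,\g]$, this is exactly the statement that $\alpha$ annihilates $[\g,\g]$, which is the claimed equivalence.

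There is no real analytical obstacle here, as the computation is short; the one point that genuinely carries the argument is the observation that $\langle{X_i,\alpha}\rangle$ is constant, so that the derivative terms drop out. This is precisely what fails for a generic (non-invariant) one-form, and it is what collapses the differential condition $d\alpha=0$ into the purely algebraic condition on the structure constants. The same reasoning, applied to the right-invariant frame $\{Y_1,\dots,Y_n\}$ with its brackets from eq.(\ref{minus}), settles the right-invariant case without any additional work.
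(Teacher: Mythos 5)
Your proposal is correct and follows essentially the same route as the paper: both apply the Koszul identity $d\alpha(X,Y)=X\langle{Y,\alpha}\rangle-Y\langle{X,\alpha}\rangle-\langle{[X,Y],\alpha}\rangle$ to an invariant frame and observe that the invariance of $\alpha$ makes the pairings constant, killing the derivative terms. You merely spell out in slightly more detail (via the structure constants and the fact that the brackets span $[\g,\g]$) what the paper compresses into ``the conclusion is obvious.''
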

\begin{proof}
Consider the identity
\begin{equation}\label{closed}
d\alpha(X,Y)= X\langle{Y,\alpha}\rangle-Y\langle{X,\alpha}\rangle-\langle{[X,Y],\alpha}\rangle.
\end{equation}
If $\alpha$ is left-invariant and we take $X$ and $Y$ left-invariant, then the first and second terms in eq.(\ref{closed})
are zero. Now the conclusion of the Proposition is obvious. The proof for right-invariant one-forms is similar.
\end{proof}

\begin{proposition}Consider the following conditions for a one-form $\alpha$ on $G$:

{\rm (i)} $\alpha$ is bi-invariant.

{\rm (ii)}  $\alpha$ is right-invariant and closed.

{\rm (iii)} $\alpha$ is left-invariant and closed.

{\rm (iv)} $\alpha$ is parallel.

\noindent Then we have the following implications: (i), (ii) and (iii) are equivalent and any one of them implies (iv).
\end{proposition}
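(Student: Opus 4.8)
The plan is to route every condition through the single algebraic requirement $\langle [\g,\g],\alpha\rangle =0$, i.e. that $\alpha$ annihilate the derived algebra. Proposition \ref{ann} already identifies this requirement with closedness for a left- or right-invariant form, so (ii) and (iii) each unpack as ``one-sided invariance together with $\langle [\g,\g],\alpha\rangle =0$''. The remaining geometric ingredient I would use is that the flow of a left-invariant vector field is right translation (and the flow of a right-invariant field is left translation); this is the device that turns an infinitesimal identity into a genuine invariance, and it is where connectedness of $G$ must be assumed.

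For the equivalence (iii) $\Leftrightarrow$ (i), let $\alpha$ be left-invariant. For left-invariant $X$ and $Y$ the pairing $\langle Y,\alpha\rangle$ is constant, so $(\mathcal{L}_X\alpha)(Y)=X\langle Y,\alpha\rangle-\langle [X,Y],\alpha\rangle=-\langle [X,Y],\alpha\rangle$. Hence $\mathcal{L}_X\alpha=0$ for every left-invariant $X$ if and only if $\langle [\g,\g],\alpha\rangle=0$. Since left-invariant fields generate right translations, the vanishing of all these Lie derivatives is precisely right-invariance of $\alpha$; combined with the standing left-invariance, this is bi-invariance. On the other hand, by Proposition \ref{ann} the condition $\langle [\g,\g],\alpha\rangle=0$ is exactly closedness. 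Reading the two chains together, both (i) and (iii) reduce, among left-invariant forms, to annihilation of $[\g,\g]$, giving (i) $\Leftrightarrow$ (iii). The equivalence (ii) $\Leftrightarrow$ (i) is obtained verbatim after interchanging left and right and using that right-invariant fields generate left translations.

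To prove that any of these implies (iv), take $\alpha$ bi-invariant, hence (by the above) annihilating $[\g,\g]$, and choose $X$, $Y$ right-invariant. Equation (\ref{kill}) gives $\langle X,\nabla_Y\alpha\rangle=\frac{1}{2}\langle [X,Y],\alpha\rangle=0$ for every right-invariant $X$. As the right-invariant fields span each tangent space, $\nabla_Y\alpha=0$ for all right-invariant $Y$, and then $C^\infty(G)$-linearity of $Z\mapsto\nabla_Z\alpha$ upgrades this to $\nabla_Z\alpha=0$ for all vector fields $Z$. Thus $\alpha$ is parallel and (iv) holds.

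The main obstacle is the one conceptual passage, not any computation: inferring honest right-invariance of $\alpha$ from the infinitesimal statement $\mathcal{L}_X\alpha=0$. This requires $G$ (or at least its identity component) to be connected, so that right translations are swept out by the one-parameter subgroups $\exp(tX)$, and I would record that hypothesis explicitly. Once it is granted, the remainder is bookkeeping, since the decisive computation is already available in eq.(\ref{kill}) and the promotion of pointwise vanishing on an invariant frame to a genuine tensor identity is routine.
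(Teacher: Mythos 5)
Your proof is correct, but it travels a genuinely different road from the paper's. For the equivalence of (i)--(iii), the paper evaluates $d\alpha(X,Y)$ on a \emph{mixed} pair --- $X$ right-invariant, $Y$ left-invariant --- exploits the fact that such fields commute (so the bracket term drops out without invoking Proposition \ref{ann}), and concludes that $\langle X,\alpha\rangle$ is constant, hence that $\alpha$ is right-invariant; it gets (i)$\Rightarrow$(ii),(iii) separately from the inversion-map argument of Proposition \ref{form}(iii). You instead funnel everything through the single algebraic condition $\langle[\g,\g],\alpha\rangle=0$ via Cartan's formula for $\mathcal{L}_X\alpha$ along left-invariant $X$ and the fact that such flows are right translations; this makes the group-theoretic content more transparent and dispenses with the inversion map, at the cost of explicitly needing connectedness (which you rightly flag --- the paper's own passage from ``$Y\langle X,\alpha\rangle=0$ for all left-invariant $Y$'' to ``$\langle X,\alpha\rangle$ constant'' quietly needs the same hypothesis). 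For (iv), the paper superposes the symmetric (Killing) identity (\ref{kil}) with the antisymmetric (closedness) identity (\ref{killl}) to kill $\nabla\alpha$, whereas you evaluate $\langle X,\nabla_Y\alpha\rangle=\tfrac12\langle[X,Y],\alpha\rangle=0$ directly on the right-invariant frame and then use $C^\infty$-linearity in the lower slot; yours is the more direct computation, the paper's has the merit of reusing the already-established Killing property verbatim. Both arguments are sound; no gap.
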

\begin{proof} The fact that (i) implies (ii) and (iii) follows from Proposition (\ref{form}) part (iii). Now suppose that (iii) holds and let $X$ and $Y$ be right and left-invariant vector fields, respectively.
Then consider again the identity
\begin{equation}\label{closedd}
d\alpha(X,Y)= X\langle{Y,\alpha}\rangle-Y\langle{X,\alpha}\rangle-\langle{[X,Y],\alpha}\rangle.
\end{equation}
Assuming that $\alpha$ is closed, then either because $[X,Y]=0$ or by using Proposition \ref{ann}, we find that eq.(\ref{closedd}) reduces to
\begin{equation}\label{closeddd}
X\langle{Y,\alpha}\rangle=Y\langle{X,\alpha}\rangle.
\end{equation}
Now the left hand side of eq.(\ref{closeddd}) is zero, since $Y$ and $\alpha$ are left-invariant. Hence $\langle{X,\alpha}\rangle$ is
constant, which implies that $\alpha$ is right-invariant and hence bi-invariant. Thus (iii) implies (i). The proof that
(ii) implies (i) is similar.
Finally, supposing that (ii) or (iii) holds we show that (iv) holds. Then as with any symmetric connection, the closure condition may be written,
for arbitrary vector fields $X$ and $Y$, as
\begin{equation}\label{killl}
\langle{\nabla_X \alpha,Y}\rangle - \langle{X,\nabla_Y\alpha}\rangle=0.
\end{equation}
Clearly eq.(\ref{kil}) and eq.(\ref{killl})imply that $\alpha$ is parallel. So a closed, invariant one-form is parallel.
\end{proof}

Of course, it may well be the case that there are no bi-invariant one-forms on $G$, for example if $G$ is
semi-simple so that $[\g,\g]=\g$. However, there must be at least one such one-form if $G$ is solvable and at least two if $G$ is nilpotent.

If we choose a basis of dimension dim $\g$-dim $[\g,\g]$ for the bi-invariant one-forms on $G$, it may be used to obtain a partial coordinate system on
$G$, since each such form is closed. Such a partial coordinate system is significant in terms of the geodesic system, in that it
gives rise to second order differential equations that resemble the system in Euclidean space.

\begin{proposition}
Each of the bi-invariant one-forms on $G$ projects to a one-form on the quotient space $G/[G,G]$, assuming that the
commutator subgroup $[G,G]$ is closed topologically in $G$. Furthermore the canonical connection $\nabla$ on $G$ projects to
a flat connection on $G/[G,G]$ and the induced system of one-forms on $G/[G,G]$ comprises a ``flat"  coordinate system.
\end{proposition}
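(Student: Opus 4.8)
The plan is to exploit the fact that $G/[G,G]$ is the abelianization of $G$: it is an abelian Lie group whose Lie algebra is $\g/[\g,\g]$, and the natural projection $\pi\colon G\to G/[G,G]$ is a Lie group homomorphism whose differential at the identity is the quotient map $\g\to\g/[\g,\g]$ with kernel exactly $[\g,\g]$. Consequently the vertical distribution of $\pi$ is spanned at each point by the left-invariant extensions of the vectors in $[\g,\g]$. Throughout I would keep in mind the crucial algebraic fact that $[\g,\g]$ is an \emph{ideal} of $\g$, since this is what makes everything below descend.

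First I would handle the one-forms. A bi-invariant one-form $\alpha$ is closed by Proposition \ref{form}(iii) and annihilates $[\g,\g]$ by Proposition \ref{ann}. To descend through $\pi$ it suffices to verify the two standard conditions for a form to be basic: that $i_X\alpha=0$ and $\mathcal{L}_X\alpha=0$ for every vertical $X\in[\g,\g]$. The first is immediate from annihilation, and the second follows from Cartan's formula $\mathcal{L}_X\alpha=i_X\,d\alpha+d\,i_X\alpha$, both terms of which vanish because $\alpha$ is closed and $i_X\alpha=0$. Hence there is a unique one-form $\bar\alpha$ on $G/[G,G]$ with $\alpha=\pi^*\bar\alpha$.

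Next I would project the connection, and this is the step I expect to be the main obstacle, since a connection on the total space of a submersion need not descend in general. The point is to check well-definedness independently of how invariant fields are lifted. Because $[\g,\g]$ is an ideal, for any vertical invariant field $V\in[\g,\g]$ and any invariant field $X$ one has $\nabla_X V=\frac{1}{2}[X,V]\in[\g,\g]$ and $\nabla_V X=\frac{1}{2}[V,X]\in[\g,\g]$, and likewise $\nabla_V W\in[\g,\g]$ for a second vertical $W$; thus all these remain vertical. These are precisely the conditions guaranteeing that $\pi_*(\nabla_X Y)$ depends only on the projected fields: for $\pi$-related invariant fields $X\sim\bar X$, $Y\sim\bar Y$ one computes $\pi_*(\nabla_X Y)=\frac{1}{2}\pi_*[X,Y]=\frac{1}{2}[\bar X,\bar Y]$, which is unaltered when $X$ or $Y$ is changed by a vertical field. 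This produces a connection $\bar\nabla$ on $G/[G,G]$ satisfying $\bar\nabla_{\bar X}\bar Y=\frac{1}{2}[\bar X,\bar Y]$ on invariant fields, that is, $\bar\nabla$ is exactly the canonical connection of the abelian group $G/[G,G]$.

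Finally, flatness and the flat coordinates follow quickly. Since $\g/[\g,\g]$ is abelian it is (trivially) two-step nilpotent, so the earlier curvature proposition (the one asserting that $\nabla$ is flat if and only if $\g$ is two-step nilpotent) shows $\bar\nabla$ is flat; equivalently $[\bar X,\bar Y]=0$ forces $\bar\nabla_{\bar X}\bar Y=0$, so the left-invariant frame is parallel. The projected forms $\bar\alpha^i$ arising from a basis of the bi-invariant one-forms on $G$ constitute a global left-invariant coframe on $G/[G,G]$, of dimension $\dim\g-\dim[\g,\g]=\dim(G/[G,G])$; each is closed, being the descent of a closed form, so locally $\bar\alpha^i=d\bar x^i$. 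In these coordinates the dual frame $\partial/\partial\bar x^i$ is parallel, the Christoffel symbols vanish, and the geodesic equations read $\ddot{\bar x}^i=0$, which is the asserted flat coordinate system resembling Euclidean space.
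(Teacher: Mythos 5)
Your proof is correct and follows essentially the same route as the paper's: the one-forms descend because they annihilate the vertical distribution and are closed (so the Lie-derivative condition along the fibers holds), and the connection descends because $[\g,\g]$ is an ideal, i.e.\ $[G,G]\triangleleft G$. You merely spell out the details that the paper compresses into two sentences (and delegates to a citation for the connection part), including the explicit verification via Cartan's formula and the verticality of $\nabla_V X$, $\nabla_X V$.
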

\begin{proof} The fact that a bi-invariant one-form on $G$ projects to a one-form on $G/[G,G]$ follows because each such form
annihilates the vertical distribution of the principal right $[G,G]$-bundle $G\rightarrow G/[G,G]$ and furthermore the equivariance, or Lie-derivative condition along
the fibers, is trivially satisfied since the one-form is closed. The fact that $\nabla$ projects to $G/[G,G]$ follows because  $[G,G]\triangleleft G$, as was noted in \cite{KT}.
\end{proof}

\section{Lie symmetries of a free particle}

In this short Section we recall the Lie symmetry vector fields of a free particle system in dimension $n$.
It may be regarded as the extreme case of the geodesic system of the canonical connection for
an \emph{abelian} Lie group. The geodesics are given by
\begin{equation}
\label{conn} \ddot{x}^i = 0.
\end{equation}

\noindent Lie symmetries:
\begin{equation}D_t, D_{x^i}, tD_t, x^iD_t, tD_{x^i}, x^iD_{x^j}, t\Delta, x^i\Delta,\end{equation}
\noindent where $\Delta=tD_t+x^iD_{x^i}$ and gives $\s\l(n+2,\R)$ as symmetry Lie algebra. We note that
$\s\l(n+2,\R)$ is of dimension $(n+2)^2-1=(n+3)(n+1)$.

\section{codimension one abelian nilradical}

\subsection{Coordinate normal form}

Rather than studying the canonical connection for general Lie groups, we will now examine a particular class, namely, those groups for which
the associated Lie algebras are solvable and have a codimension one abelian nilradical.
The following theorem gives a coordinate normal form for the
canonical connection of a Lie algebra that has a codimension one
abelian ideal. Such Lie algebras are characterized by a matrix
ad$(e_{n+1})$ where $e_{n+1}$ is a fixed element of $\g$, usually taken as
the last element of a basis.

\begin{theorem}\label{Thm} {\rm (i)} Suppose that $\g$ is Lie algebra of dimension $n+1$ such that
there exists a basis $\{e_1,e_2,...,e_n,e_{n+1}\}$ for which the
only non-zero brackets in $\g$ are given by $[e_i,e_{n+1}] =
a_i^je_j$, where, on the right hand side the summation over $j$
extends from $1$ to $n$. Then there exists a coordinate system
$(x^i,w)$ on the local Lie group $G$ associated to $\g$, such that
$\g$ is faithfully represented by $(X_i,W)$ where $X_i =
\frac{\partial}{\partial x^i}$ and $W= \frac{\partial}{\partial
w}+a^k_jx^j\frac{\partial}{\partial x^k}$.

\noindent {\rm (ii)} In the coordinate system $(x^i,w)$ of  (i) the geodesic
equations of the canonical connection are given by
\begin{equation}
\label{conn} \ddot{x}^i = a^i_j\dot{x}^j\dot{w},\quad \ddot{w} = 0.
\end{equation}
\end{theorem}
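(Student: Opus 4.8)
The plan is to treat the two parts in turn, first realizing $\g$ concretely by vector fields and reading off a coordinate system, then computing the connection in those coordinates. For part (i) the cleanest route is a direct verification that the proposed fields close on $\g$. I would check on all pairs that $[X_i,X_j]=0$ and that
\begin{equation}\label{planbr}
[X_i,W]=\Bigl[\frac{\partial}{\partial x^i},\,\frac{\partial}{\partial w}+a^k_jx^j\frac{\partial}{\partial x^k}\Bigr]=a^k_i\frac{\partial}{\partial x^k}=a^k_iX_k,
\end{equation}
which reproduces the single nonzero structure relation $[e_i,e_{n+1}]=a_i^je_j$, while $[W,W]=0$ is automatic. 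Thus $e_i\mapsto X_i$, $e_{n+1}\mapsto W$ is a Lie algebra homomorphism, and it is faithful because $X_1,\dots,X_n,W$ are pointwise linearly independent (only $W$ carries a $\partial/\partial w$ component). To see that $(x^i,w)$ really are coordinates on the local group associated to $\g$, one interprets $G$ as the semidirect product $\R^n\rtimes\R$ with multiplication $(x,w)\cdot(x',w')=(x+e^{wA}x',\,w+w')$, where $A=(a^i_j)$ represents $\mathrm{ad}(e_{n+1})$ on the abelian ideal; translating a basis of $T_IG$ then yields exactly the invariant frame $(X_i,W)$, the precise matrix appearing being a matter of the left/right convention.

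For part (ii) the idea is to extract the Christoffel symbols of $\nabla$ in $(x^i,w)$ by exploiting that $\nabla_UV=\tfrac12[U,V]$ holds on the invariant frame of (i). First I would express the coordinate frame through the invariant one, namely $\partial/\partial x^i=X_i$ and $\partial/\partial w=W-a^k_jx^jX_k$. On the invariant frame the canonical formula together with \eqref{planbr} gives the four values $\nabla_{X_i}X_j=0$, $\nabla_{X_i}W=\tfrac12 a^k_iX_k$, $\nabla_WX_i=-\tfrac12 a^k_iX_k$ and $\nabla_WW=0$. Feeding these, together with the Leibniz rule applied to the non-constant coefficients $a^k_jx^j$, into the bilinear expansions of $\nabla_{\partial/\partial x^i}\partial/\partial x^j$, of $\nabla_{\partial/\partial x^i}\partial/\partial w$ and of $\nabla_{\partial/\partial w}\partial/\partial w$ will produce all the Christoffel symbols.

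I expect the outcome to be that the only nonzero symbols are $\Gamma^k_{iw}=\Gamma^k_{wi}=-\tfrac12 a^k_i$, so that the geodesic system $\ddot{x}^c+\Gamma^c_{ab}\dot{x}^a\dot{x}^b=0$ collapses to $\ddot{x}^k=a^k_i\dot{x}^i\dot{w}$ and $\ddot{w}=0$, as asserted. The main obstacle is the computation of $\nabla_{\partial/\partial w}\partial/\partial w$: because $\partial/\partial w$ is not invariant, its expansion $\nabla_{W-V}(W-V)$ with $V=a^k_jx^jX_k$ produces four terms, each a multiple of $(A^2)^m_jx^jX_m$, and one must check that the coefficients $-\tfrac12$, $-\tfrac12$, $+1$ (plus the vanishing $\nabla_WW$) cancel exactly to zero. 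A milder cancellation of the same flavour — between the $\tfrac12$ coming from $\nabla_{X_i}W$ and the $-1$ coming from differentiating $a^k_jx^j$ — is what produces the clean factor $-\tfrac12 a^k_i$ in the mixed symbol. Once these two cancellations are verified the geodesic equations follow immediately.
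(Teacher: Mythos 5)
Your proof is correct, and it follows exactly the route the paper relies on: verify the bracket relations $[X_i,X_j]=0$, $[X_i,W]=a^k_iX_k$ directly, identify $(X_i,W)$ as the right-invariant frame of the semidirect product $\R^n\rtimes\R$, and then read off the Christoffel symbols from $\nabla_UV=\tfrac12[U,V]$ on that frame — your values $\Gamma^k_{iw}=\Gamma^k_{wi}=-\tfrac12 a^k_i$ and the cancellation $-\tfrac12-\tfrac12+1=0$ in $\nabla_{\partial/\partial w}\partial/\partial w$ agree with the connection components the paper records in Section 5.3. The paper in fact states this theorem without proof (offering only the remark that part (i) implements Lie's third theorem and listing the invariant frames and coframes), so your argument supplies precisely the omitted verification.
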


\bigskip
\noindent Part (i) of Theorem \ref{Thm} gives an
automatic implementation of Lie's third theorem \cite{Hel}. Part
(ii) then gives a normal form for the geodesics of the canonical
connection. In general, these geodesics can only be computed if a
vector field representation of $\g$ is known and only then by
means of considerable calculation.

Although we shall not need it, it is possible to obtain a matrix Lie group representation in $GL(n+1,\R)$ by
exponentiating the matrix $a_j^i$, assuming that it is upper triangular, and taking $n$ arbitrary entries in the last column.

In fact $(X_i,W)$ are the right-invariant vector fields and the dual one-forms are given by
\begin{equation}
dx^i-a_j^ix^jdw,dw.
\end{equation}
Similarly, the left-invariant vector fields are given by
\begin{equation}
e^{wa_j^i}\frac{\partial}{\partial x^i}, \frac{\partial}{\partial w}
\end{equation}
and the dual one-forms are given by
\begin{equation}
e^{-wa_j^i}dx^j, dw.
\end{equation}
We note also that $dw$ is bi-invariant.

\section{Lie symmetries of the Canonical Connection}



We encode the geodesic equations of a connection in terms of the vector field $\Gamma=\frac{\partial}{\partial t}+u^i\frac{\partial}{\partial x^i}+f^i\frac{\partial}{\partial u^i}$. In order to be a Lie symmetry we need a vector field of the form $X=\xi\frac{\partial}{\partial t}+\eta^i\frac{\partial}{\partial x^i}$ such that $[\tilde{X},\Gamma]=\lambda \Gamma$ for some function $\lambda$, where $\tilde{X}$ denotes the first prolongation of $X$. It is apparent that $\frac{\partial}{\partial t}$ is a Lie symmetry. In the case of  $t\frac{\partial}{\partial t}$, its prolongation is given by $t\frac{\partial}{\partial t}-\Delta$ where  $\Delta$  is the dilation field $u^i\frac{\partial}{\partial u^i}$. It follows that $t\frac{\partial}{\partial t}$ is a Lie symmetry, with $\lambda=-1$, because $[\Delta, \Gamma]=\Gamma-\frac{\partial}{\partial t}$. Of course $\frac{\partial}{\partial t}$ and $t\frac{\partial}{\partial t}$ are symmetries for any linear connection. Thus, at the outset, we have the following Lie symmetries for the canonical connection:

\begin{itemize}

\item $\frac{\partial}{\partial t}$

\item $t\frac{\partial}{\partial t}$

\item any left-invariant vector field

\item any right-invariant vector field

\end{itemize}

\subsection{Lie symmetries of a symmetric connection}

Define \begin{equation}
P^i=\dot{\eta}^i-u^i\dot{\xi}
\end{equation}
then
\begin{equation}
\tilde{X}=\xi\frac{\partial}{\partial t}+\eta^i\frac{\partial}{\partial x^i}+P^i\frac{\partial}{\partial u^i}
\end{equation}
and
\begin{equation}
\dot{P^i}=\ddot{\eta}^i-u^i\ddot{\xi}-f^i\dot{\xi}.
\end{equation}
The conditions for $X$ to be Lie symmetry amount to
\begin{eqnarray}
\lambda=-\dot{\xi}\\
\dot{\xi}u^i=\dot{\eta}^i-P^i \\
\dot{\xi}f^i+P^j\frac{\partial f^i}{\partial u^i}-\dot{P^i}+\xi \frac{\partial f^i}{\partial t}+\eta^j \frac{\partial f^i}{\partial x^j}=0  \label{third}
\end{eqnarray}
the second of which is an identity. Using the definition of $P^i$ in \ref{third} gives
\begin{equation} \label{symmcond}
\dot{\xi}f^i=\ddot{\eta}^i-u^i\ddot{\xi}-f^i\dot{\xi}-(\dot{{\eta}^j}-u^j\dot{\xi})\frac{\partial f^i}{\partial u^j}-\eta^i\frac{\partial f^j}{\partial x^i}.
\end{equation}
Finally since the $f^i$ are are homogeneous quadratic $u^i$, we find that eq.(\ref{symmcond}) reduces to
\begin{equation} \label{symmcondd}
\ddot{\eta}^i-u^i\ddot{\xi}-\dot{{\eta}^j}\frac{\partial f^i}{\partial u^j}-\eta^i\frac{\partial f^j}{\partial x^i}=0.
\end{equation}

Next we shall make use of the following identities:
\begin{equation} \label{xicond}
\ddot{\xi}=\xi_{tt}+2u^j\xi_{tk}+u^ju^k\xi_{jk}+f^i\xi_{i}
\end{equation}
\begin{equation} \label{etacond}
\ddot{\eta^i}=\eta^i_{tt}+2u^j\eta^i_{tj}+u^ju^k\eta^i_{jk}+f^j\eta^i_{j}
\end{equation}
where we have used subscripts to denote derivatives. Eq.(\ref{symmcondd}) contains terms of degree  zero, one, two and three in the $u^i$ and we write down the coefficients of each term so as to obtain
\begin{eqnarray}
\label{ge01} \frac{\partial^2 {\eta}^i}{\partial t^2}=0\\
\label{ge02} 2u^j\frac{\partial^2 {\eta}^i}{\partial x^j \partial t}-u^i\frac{\partial^2 \xi}{\partial t^2}-\frac{\partial {\eta}^j}{\partial t}\frac{\partial f^i}{\partial u^j}=0\\
\label{ge03} u^ju^k\frac{\partial^2 \eta^i}{\partial x^j \partial x^k}+f^j\eta^{i}_{j}-{\eta}^j\frac{\partial f^i}{\partial x^j}-2u^iu^j\frac{\partial^2 \xi}{\partial t \partial x^j}-u^k\frac{\partial \eta^j}{\partial x^k}\frac{\partial f^i}{\partial u^j}=0\\
\label{ge04} u^iu^ju^k\frac{\partial^2 \xi}{\partial x^j \partial x^k}+u^if^k\frac{\partial \xi}{\partial x^k}=0.
\end{eqnarray}

Now we shall write
\begin{equation}
f^i=-\Gamma_{jk}^i u^ju^k.
\end{equation}
Then eqs.(\ref{ge02}, \ref{ge03}) and eq.(\ref{ge04}) give, on equating powers of the $u^i$ to zero,
\begin{eqnarray}
\label{ge05} 2\frac{\partial^2 {\eta}^i}{\partial x^k \partial t}-\delta_k^i\frac{\partial^2 \xi}{\partial t^2}+2\frac{\partial \eta^j}{\partial t}\Gamma_{jk}^i=0\\
\label{ge06}\frac{\partial^2 \xi}{\partial x^j \partial x^k}-\Gamma^{i}_{jk}\xi_i=0\\
\label{ge07} \frac{\partial^2 \eta^i}{\partial x^k \partial x^m}+\frac{\partial \eta^j}{\partial x^k}\Gamma_{jm}^i+\frac{\partial \eta^j}{\partial x^m}\Gamma_{jk}^i+ {\eta}^j \frac{\partial {\Gamma}_{km}^i}{\partial x^j}-\delta_k^i\frac{\partial^2 \xi}{\partial t \partial x^m}-\delta_m^i\frac{\partial^2 \xi}{\partial t \partial x^k}-\eta_{j}^{i}\Gamma_{km}^{j}=0.
\end{eqnarray}


\subsection{More Lie symmetries}

In the case of the canonical connection, where the associated Lie algebra has a codimension one abelian nilradical, the connection components are constant and so each of the coordinate vector fields $\frac{\partial}{\partial x^i}$ is a Lie symmetry for which $\lambda=0$ and of course they are right-invariant as noted in Theorem \ref{Thm}. It is apparent that $w\frac{\partial}{\partial t}$ is always a Lie symmetry with $\lambda=- \dot{w}$.

Another Lie symmetry is given by $X=x^i\frac{\partial}{\partial x^i}$. To see that it is so, note that the prolongation is given by $\tilde{X}=x^i\frac{\partial}{\partial x^i}+u^i\frac{\partial}{\partial u^i}$. Then
\begin{eqnarray}
[\tilde{X},\Gamma]=[x^i\frac{\partial}{\partial x^i}+u^i\frac{\partial}{\partial u^i}, \frac{\partial}{\partial t} +u^i\frac{\partial}{\partial x^i}+ \dot{w}\frac{\partial}{\partial w}+f^i\frac{\partial}{\partial u^i}]\\
=[x^i\frac{\partial}{\partial x^i},u^i\frac{\partial}{\partial x^i}]+[u^i\frac{\partial}{\partial u^i},u^j\frac{\partial}{\partial x^j}]+[u^i\frac{\partial}{\partial u^i},a_k^ju^k \dot{w}\frac{\partial}{\partial u^j}]\\=-u^i\frac{\partial}{\partial x^i}+u^i\frac{\partial}{\partial x^i}=0.
\end{eqnarray}







\subsection{Codimension abelian nilradical case}

Next we shall adapt eqs.(\ref{ge01}  \ref{ge05}  \ref{ge06}  \ref{ge07}) to the codimension abelian nilradical case. Note that the Lie algebra is now of dimension $n+1$. The first $n$ coordinates are denoted by $x^i$ and the $(n+1)$th by $w$. We shall denote a Lie symmetry now by
\begin{equation}
X=\xi\frac{\partial}{\partial t}+ \eta^i\frac{\partial}{\partial x^i}+\eta \frac{\partial}{\partial w}.
\end{equation}
and again we shall employ the summation convention in the range $1$ to $n$. The connection components coming from eq.(\ref{conn}) are given by
\begin{equation}
\Gamma_{jk}^i=0,\, \Gamma_{jn+1}^i=\Gamma_{n+1j}^i=-\frac{1}{2}a_j^i,\, \Gamma_{n+1n+1}^i=0,\, \Gamma_{ij}^{n+1}=0,\, \Gamma_{in+1}^{n+1}=\Gamma_{n+1i}^{n+1}=0,\, \Gamma_{n+1n+1}^{n+1}=0.
\end{equation}
We find the following conditions on $\xi,\eta^i, \eta$ where in the interest of brevity, derivatives with respect to $t$ and $w$ are denoted by subscripts and derivatives with respect to $x^i$ is denoted by subscript $i$ and 
\begin{eqnarray}
(i)\, \xi_{jk}=0\, (ii)\, \xi_{iw}+\frac{1}{2}a_i^j \xi_j=0\, (iii) \, \xi_{ww}=0 \, (iv) \eta_{tt}=0\, (v)\, \eta_{tk}=0\, (vi)\, \eta_{km}=0 &&  \, \nonumber \\
(vii) \eta_{kw}-\xi_{tk}+\frac{1}{2}a^{j}_{k}\eta_{j}=0 \,  (viii) \, \eta_{ww}-2\xi_{tw}=0, (ix)\, 2\eta_{tw}-\xi_{tt}=0\, && \nonumber \\
(x)\, \eta_{tt}^i=0\, \, (xi)\, \eta^i_{tw}-\frac{1}{2}a_k^i\eta^k_{t}=0\, (xii)\, \eta^i_{ww}-a_j^i\eta^j_{w}=0 && \nonumber \\
(xiii)\, 2\eta^i_{tk}-\delta_k^i \xi_{tt}-a_k^i\eta_{t}=0\, (xiv)\, \eta^i_{km}-\frac{1}{2}a_m^i\eta_{k}-\frac{1}{2}a_k^i\eta_{m}-\delta_k^i\xi_{tm}-\delta_m^i\xi_{tk}=0 && \,  \nonumber \\ (xv)\, \eta^i_{kw}-\frac{1}{2}a_j^i\eta^j_{k}-\frac{1}{2}a_k^i\eta_{w}+\frac{1}{2}a_k^j\eta_j^i-\delta_k^i\xi_{tw}=0. && \nonumber
\end{eqnarray}

\subsection{Solving the PDE}

Now we integrate {(i)-(xv)} above, to the extent possible. As such the solution to (iv),(v),(vi) is given by
\begin{equation}
\eta=B_k(w)x^k+C(w)t+D(w).
\end{equation}
Turning now to (ii) and (iii), if we take the $w$-derivative of (ii) and use (ii) we conclude that
\begin{equation}
a_i^j \xi_{jw}=0.
\end{equation}
Let us continue by assuming that the matrix $A$ is non-singular. Then $\xi_{jw}=0$ and using (ii) again, we find that $\xi_{j}=0$.
As such, from (iii) we conclude that
\begin{equation}
\xi=E(t)w+F(t).
\end{equation}

Now from (viii) and (ix) we obtain $\xi_{ttw}=0$ and $\eta_{tww}=0$. From the $w$-derivative of (vii)
and the $x^k$-derivative of (viii) we deduce that $\eta_{jw}=0$ and hence again from (vii) that  $\eta_{j}=0$.
Integrating (viii) and (ix) gives
\begin{equation}\label{xieta}
\xi=(Ft+G)w+K+Lt+Ut^2,\,\eta=Ct+Fw^2+Hw+J+Utw.
\end{equation}

Now we consider (x),(xi) and (xii). Take the $w$ derivative of (xi) and the $t$ derivative of (xii). We deduce that $\eta_{tw}^k=0$ and hence
from (xi) that $\eta_{t}^k=0$, since we are assuming that $A$ is non-singular. Concerning (xii), we find that the solution is
\begin{equation}
\eta^i=A^{-1}e^{wA}M^i(x)+N^i(x).
\end{equation}

It remains to examine (xiii),(xiv) and (xv). However, we see that in view of the conditions that have already been solved, (xiii) gives
$C=U=0$.
Furthermore (xiv) reduces to
\begin{equation}
\eta_{km}^i=0.
\end{equation}
Hence we may write
\begin{equation}\label{etai}
\eta^i={(a^{-1})}_j^ie^{wa_k^j}(P_m^k x^m+Q^k)+R_j^i x^j+S^i.
\end{equation}
At this point, only (xv) remains to be satisfied and it is
\begin{equation}\label{new}
e^{wa_j^i}P_k^j+a_k^j({(a^{-1})}_m^ie^{wa_l^m}P_j^l+R_j^i)-a_j^i({(a^{-1})}_m^je^{wa_l^m}P_k^l+R_k^j)-a_k^i(2Fw+H)-\delta_k^iF=0.
\end{equation}
Eq.(\ref{new}) splits into two conditions, the first of which may be written as
\begin{equation}\label{neww}
e^{wA}PA+Ae^{wA}P=0.
\end{equation}
Eq.(\ref{neww}) is equivalent to
\begin{equation}\label{newwww}
AP+PA=0
\end{equation}
that is, $P$ anti-commutes with $A$, and for generic $A$,
will possess only the zero matrix as solution.

The second condition coming from Eq.(\ref{new}) is
\begin{equation}\label{newww}
a_k^jR_j^i-a_j^iR_k^j-(2Fw+H)a_k^i-F\delta_k^i=0.
\end{equation}
Note that there are no conditions on $Q^k$ and $S^k$ and that necessarily $F=0$. After putting $F=0$, eq(\ref{newww}) may be written in matrix form as
\begin{equation}\label{comm}
[R,A]=HA
\end{equation}
where the left hand side is a commutator.
From eq.(\ref{comm}) we conclude that either $H=0$ or tr$(A)=0$. If
tr$(A)=0$ then the Lie algebra is unimodular; if tr$(A)\neq0$ then $R$ commutes with $A$.

To summarize, the solution for $\xi,\eta^i,\eta$ is given by eq(\ref{xieta}) with $C=F=U=0$
and 
\begin{equation}\label{etaii}
\eta^i=R_j^i x^j+S^i+e^{wa_k^j}(P_j^kx^j+T^k)
\end{equation}
where $H$ and $R$ satisfy condition eq.(\ref{comm}) and $P$ satisfies eq.(\ref{newwww}).

\subsection{Analyzing the solutions of the algebraic condition \ref{comm}}

From the preceding analysis we see that the only ambiguity that arises in the determination of the possible Lie symmetries depends on the solution
of the \emph{algebraic} eq.( \ref{comm}). We are already assuming that the matrix $A$ is non-singular. If furthermore, the codimension one Lie algebra
is not unimodular, that is traces of all ad-matrices are not zero, so in our case tr$A\neq0$, then necessarily $H=0$ and $R$ must commute with $A$.
The centralizer of $A$ in $\g\l(n,\R)$ certainly contains all polynomials in $A$ and equality holds if and only if $A$ is \emph{non-derogatory}. In fact $A$ is non-derogatory
if and only if its minimum polynomial coincides with its characteristic polynomial if and only if each eigenvalue is of geometric multiplicity one. The set of non-derogatory
matrices is Zariski open and dense and contains the set of all matrices with distinct eigenvalues. For such matrices the centralizer of $A$ is of dimension $n$;
otherwise, the dimension of the centralizer of $A$ becomes more problematic to estimate. The algebra of polynomials can have quite a small dimension. The extreme case of all
is when $A$ is a non-zero multiple of the identity, in which case all matrices in $\g\l(n,\R)$ commute with $A$. This case corresponds to algebras $A_{3.3}, A_{4.5(a=b=1)}, A_{5.7(a=b=c=1)}$
in \cite{PSWZ}.

\subsection{Analyzing the solutions of the PDE}

It is apparent that the coefficients of $S^i$ and $e^{wa_k^j}T^k$ in \ref{etaii} correspond to right and left-invariant vector fields.
If we take $R=A$ and $H=0$ in eq.(\ref{comm}) we obtain $a_j^ix^j\frac{\partial }{\partial x^i}$ as a symmetry vector field. If we add to it $\frac{\partial }{\partial w}$
coming from $J$ in eq(\ref{xieta}) we obtain the $n+1$st right-invariant vector field. Note also that $\frac{\partial }{\partial w}$ is the $n+1$st left-invariant vector field.

One more symmetry vector field may be obtained by taking $R$ as the identity and again $H=0$ in eq.(\ref{comm}). This vector field was already found in Section 5.2. Similarly
from $\xi$ appearing in  eq(\ref{xieta}) we reproduce $\frac{\partial }{\partial t}, t\frac{\partial }{\partial t}$ and $w\frac{\partial }{\partial t}$.

Finally we count the minimal number symmetry vector fields. We have $n+1$ right-invariant and $n+1$ left-invariant vector fields. We have also
$\frac{\partial }{\partial t}, t\frac{\partial }{\partial t}$ and $w\frac{\partial }{\partial t}$. In the generic case, where $A$ is non-singular, has trace non-zero
is non-derogatory and the only solution to eq.(\ref{newwww}) is trivial, we obtain an extra $n-1$ independent symmetries coming from eq.(\ref{comm}) and also $H=0$. At the other extreme, where $A$ is a non-zero multiple
of the identity, we obtain $n^2+2n+4$ independent symmetries.

\begin{theorem}\label{thm}
In case the matrix $A$ is non-singular and has trace non-zero the dimension of the Lie symmetry algebra of the
geodesic system eq.(\ref{conn}) is at least $3n+4$
and at most $n^2+2n+4$; moreover, $3n+4$ occurs only if $A$ is non-derogatory and $n^2+2n+4$
if and only if $A$ is a non-zero multiple of the identity.
\end{theorem}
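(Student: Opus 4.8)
The plan is to extract from the integration of (i)--(xv) an exact formula for the symmetry dimension and then reduce the two-sided bound to classical facts about the centralizer of a matrix. Under $\mathrm{tr}(A)\neq 0$, taking the trace of eq.(\ref{comm}) and using $\mathrm{tr}[R,A]=0$ forces $H=0$, so $R$ ranges precisely over the centralizer $\mathcal{Z}=\{R:RA=AR\}$, while eq.(\ref{newwww}) says $P$ ranges over the anticommutant $\mathcal{A}=\{P:AP+PA=0\}$. Reading eq.(\ref{xieta}) (with $C=F=U=0$) and eq.(\ref{etaii}), a symmetry is then determined by the four scalars $K,L,G,J$, which produce $\frac{\partial}{\partial t}$, $t\frac{\partial}{\partial t}$, $w\frac{\partial}{\partial t}$ and $\frac{\partial}{\partial w}$, the two vectors $S,T$ giving the $2n$ right- and left-invariant translations, and the matrices $R\in\mathcal{Z}$, $P\in\mathcal{A}$. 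I would therefore aim at the formula
\[
\dim = 2n+4+\dim\mathcal{Z}+\dim\mathcal{A}.
\]

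First I would verify that this parametrization is faithful, so that the symmetry dimension equals the number of free parameters. Setting the vector field identically to zero, the coefficient of $\frac{\partial}{\partial t}$ kills $G,L,K$ and the coefficient of $\frac{\partial}{\partial w}$ kills $J$; the coefficient of $\frac{\partial}{\partial x^i}$ is an affine function of $x$ whose constant and linear parts involve $e^{wA}$, and evaluating at $w=0$ together with one $w$-derivative at $w=0$ forces, using the non-singularity of $A$, the vanishing of $P$ and $T$ and hence of $R$ and $S$. So the assignment is injective and the count is exact.

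For the lower bound I would invoke the classical fact that $\dim\mathcal{Z}\geq n$ for every $A$, with equality if and only if $A$ is non-derogatory (its minimal and characteristic polynomials coincide). Since $\dim\mathcal{A}\geq 0$ this gives $\dim\geq 3n+4$, and because $3n+4$ is the minimum, attaining it forces simultaneously $\dim\mathcal{Z}=n$ and $\dim\mathcal{A}=0$; the first entails that $A$ is non-derogatory, which is the asserted ``only if.'' For the upper bound I would complexify and use that $A$ is non-singular, so $0$ is not an eigenvalue and the conditions $\lambda_i=\lambda_j$ and $\lambda_i=-\lambda_j$ on index pairs are mutually exclusive; hence $\mathcal{Z}$ and $\mathcal{A}$ account for disjoint families of pairs and $\dim\mathcal{Z}+\dim\mathcal{A}\leq n^2$, giving $\dim\leq n^2+2n+4$. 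For $A=\lambda I$ with $\lambda\neq 0$ one has $\dim\mathcal{Z}=n^2$ and $\mathcal{A}=0$ (since $2\lambda P=0$), so equality holds; conversely $\dim\mathcal{Z}=n^2$ means $\mathcal{Z}$ is all of $\g\l(n,\R)$, i.e. $A$ is central, i.e. $A=\lambda I$, with $\lambda\neq 0$ by non-singularity.

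The step I expect to be the genuine obstacle is controlling the anticommutant $\mathcal{A}$ rather than the centralizer. The subtlety is that a non-scalar $A$ whose spectrum is paired as $\pm\lambda$ (for instance $\mathrm{diag}(\lambda,\lambda,-\lambda)$, which has non-zero trace) can still satisfy $\dim\mathcal{Z}+\dim\mathcal{A}=n^2$, so the hypotheses $\mathrm{tr}(A)\neq 0$ and non-singularity are by themselves not enough to force the clean ``$n^2+2n+4$ if and only if $A=\lambda I$.'' The clean equality characterizations rely on the standing genericity that eq.(\ref{newwww}) has only the trivial solution, i.e. $\mathcal{A}=\{0\}$ — precisely the earlier remark that the only surviving ambiguity is eq.(\ref{comm}). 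Under that genericity the formula collapses to $\dim=2n+4+\dim\mathcal{Z}$ and the bounds $n\leq\dim\mathcal{Z}\leq n^2$ deliver the theorem verbatim; making airtight exactly why the borderline paired spectra are excluded is where the argument must be pinned down.
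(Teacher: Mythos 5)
Your parametrization and the resulting formula $\dim = 2n+4+\dim\mathcal{Z}+\dim\mathcal{A}$ is exactly what the paper's Sections 5.4--5.6 establish implicitly: the paper counts $n+1$ right-invariant plus $n+1$ left-invariant fields, the three fields $\frac{\partial}{\partial t}$, $t\frac{\partial}{\partial t}$, $w\frac{\partial}{\partial t}$, and then the ``extra'' $n-1$ symmetries from the centralizer when $A$ is non-derogatory; this is your $2n+4+n+0=3n+4$ once one notes that $W=\frac{\partial}{\partial w}+a^k_jx^j\frac{\partial}{\partial x^k}$ uses up both the parameter $J$ and the centralizer element $R=A$. Your injectivity check, the Frobenius bound $\dim\mathcal{Z}\geq n$ with equality precisely for non-derogatory $A$, and the Sylvester-operator argument giving $\dim\mathcal{Z}+\dim\mathcal{A}\leq n^2$ when $0$ is not an eigenvalue of $A$, together make the lower bound, its ``only if'' clause, and the upper bound airtight --- more rigorously than the paper, which asserts the extremes without proving that nothing lies above $n^2+2n+4$.

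The obstacle you flag in the final clause is genuine, and the paper's own data confirms it. Take $n=3$ and $A=\mathrm{diag}(1,1,-1)$: it is non-singular with $\mathrm{tr}(A)=1\neq 0$, and $\dim\mathcal{Z}=5$, $\dim\mathcal{A}=4$, so your formula gives $6+4+5+4=19=n^2+2n+4$ although $A$ is not a multiple of the identity. This is exactly the algebra $A_{4.5ab}$ with $a=1$, $b=-1$, for which the paper's MAPLE computation lists $19$ symmetries, the four anticommutant ones being $ze^wD_x$, $ze^wD_y$, $xe^{-w}D_z$, $ye^{-w}D_z$ (the last printed with a typo). Hence the ``only if'' direction of ``$n^2+2n+4$ if and only if $A$ is a non-zero multiple of the identity'' is false as literally stated; it becomes correct only under the additional standing hypothesis, invoked elsewhere in the paper's ``generic case,'' that eq.(\ref{newwww}) admits only the trivial solution, or if the maximal case is restated as $\dim\mathcal{Z}+\dim\mathcal{A}=n^2$. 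Your proof correctly isolates this as the one point where the statement needs repair; everything else matches the paper's argument and sharpens it.
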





\section{Four-dimensional canonical geodesic systems and their symmetry algebras}

In this final Section we illustrate our results by considering Lie symmetries of the canonical connection
with codimension one abelian nilradical case in dimension four. The calculations here have been performed using
MAPLE. The algebras correspond to cases $A_{4.2a},A_{4.3},A_{4.4},A_{4.5ab},A_{4.6ab}$
in \cite{SW}. Our analysis is extensive although not exhaustive for these five cases. The matrix $A$ that played the key role in Sections
4 and 5 here is $3\times3$. In case $A_{4.3}$ the matrix $A$ is singular, so our results are not applicable; nonetheless, it is included
as it suggests that the symmetry algebra will be of greater dimension when $A$ is singular. In all the other cases, the symmetry algebra
is of dimension at least $13$ in accordance with Theorem (\ref{thm}); in several cases extra symmetries occur because there are non-trivial solutions to eq.(\ref{newwww}).

\subsection{$A_{4.2a}$}

Lie Brackets: $[e_1,e_4]=ae_1, [e_2,e_4]=e_2+e_3, [e_3,e_4]=e_3,\,\,(a \neq 0)$.

\noindent Geodesics: \begin{equation} \begin{array}{l}
 \ddot{x}=a\dot{x}\dot{w} \\
 \ddot{y}=(\dot{y}+\dot{z})\dot{w}\\
 \ddot{z}=\dot{z}\dot{w} \\
 \ddot{w}=0.
\end{array} \end{equation}

\noindent Lie symmetries $a\neq \pm 1$:
\[ \begin{array}{l} e_{1}=-zD_y,\,e_{2}=D_z,\,\,\,e_{3}=e^w(D_z+ wD_y)\,,e_{4}=D_y,\,\,, e_{5}=e^wD_y,\,\,\,e_6=wD_t, e_7=D_t,\,\,\,\\
e_8= D_x, \,\,\,
e_9=e^{aw} D_x,\,\,
 e_{10}=yD_y+zD_z,\,\,\, e_{11}=-D_w, e_{12}=tD_t,\,\,\,
e_{13}=xD_x.
\end{array} \]


\noindent The Lie algebra $\R^4 \rtimes (H_{5}\oplus \R^4)$ is $13$-dimensional solvable. It has a
$9$-dimensional non-abelian nilradical $H_5\oplus \R^4$. Here $H_5$ denotes the $5$-dimensional Heisenberg algebra and is  spanned by $e_1, e_2, e_3, e_4, e_5$ and the $\R^4$ summand is spanned by $e_6, e_7, e_8, e_9$. The $4$-dimensional abelian complement to $H_5\oplus \R^4$ is spanned by $e_{10}, e_{11}, e_{12}, e_{13}$.\\

\noindent Lie symmetries $a=\pm 1$:
\[ \begin{array}{l}
e_1= D_x, \,\,\, e_2=e^{aw}D_x,\,\,\, e_{3}=zD_y,\,\,\, e_{4}=D_z, e_{5}=e^w(D_z+wD_y),\,\,\, e_{6}=D_y,\,\,\,e_{7}=e^wD_y,\,\,\,\\
e_{8}=xe^{\frac{(1-a)w}{2}}D_y, \,\,\, e_{9}=ze^{\frac{(a-1)w}{2}}D_x,\,\,\,e_{10}=wD_t, e_{11}=D_t,\,\,\,e_{12}= yD_y+zD_z,\,\,\, e_{13}=D_w,\\
e_{14}=tD_t,\,\,\,  e_{15}=xD_x.\\
\end{array}\]\\
\noindent In each case the Lie algebra $\R^4 \rtimes (N_9\oplus \R^2)$ is $15$-dimensional solvable. It has
an $11$-dimensional decomposable nilradical, $N_9\oplus \R^2$, where  $N_{9}$ is is $9$-dimensional  indecomposable nilpotent spanned by $e_1,e_2,e_3,e_4,e_5, e_6, e_7,
e_8, e_9$ and $\R^2$ by $e_{10}, e_{11}$ and a $4$-dimensional abelian
complement spanned by $e_{12}, e_{13}, e_{14}, e_{15}$. The ``extra" symmetry for $a=1$ is explained by the fact that only for that value is the
matrix $A$ not non-derogatory; meanwhile $a=-1$ is the unique value for which  eq.(\ref{newwww}) has a non-trivial solution.


\subsection{$A_{4.3}$}

Lie brackets: $[e_1,e_4]=e_1, [e_3,e_4]=e_2$.

\noindent Geodesics:
\begin{equation}
\begin{array}{l}
 \ddot{x}=\dot{x}\dot{w} \\
 \ddot{y}=\dot{z}\dot{w}\\
 \ddot{z}=0 \\
 \ddot{w}=0.
\end{array}
\end{equation}

\bigskip

\noindent Lie symmetries:

\[ \begin{array}{l}
e_1= wD_t,\,\,\, e_2=\frac{w^2}{2}D_y+wD_z,\,\,\, e_3=e^wD_x,\,\,\,  e_4=D_y, e_5=D_x, \,\,\, e_6=D_z,
e_7=D_t,\,\,\, e_8= wD_y, \,\,\,\\ e_9=tD_t+yD_y+zD_z,\,\,\, e_{10}=D_w+\frac{z}{2}D_y,\,\,\, e_{11}=xD_x, \,\,\, e_{12}=(wz-y)D_y+zD_z\\
e_{13}=\frac{1}{3}(2tD_t-yD_y-zD_z),\,\,\, e_{14}=tD_y,\,\,\,e_{15}=zD_t,
e_{16}=\frac{tw}{2}D_y+tD_z,\,\,\, e_{17}= (wz-2y)D_t,\,\,\,\\ e_{18}= zD_y,\,\,\,
e_{19}=(\frac{zw^2}{2}-yw)D_y +(wz-2y)D_z.\\
\end{array} \]\\
\noindent The symmetry algebra is $\s\l(3,\R) \rtimes ( \R^3\rtimes \R^8)$ where $\s\l(3,\R)$ is spanned by $e_{12},e_{13},e_{14},e_{15},e_{16},e_{17},e_{18},e_{19}$, the $\R^3$ factor is spanned by $e_{9},e_{10},e_{11}$ and the nilradical $\R^8$ is spanned by $e_{1},e_{2},e_{3},e_{4},e_{5},e_{6},e_{7},e_{8}$.


\subsection{$A_{4.4}$}

Lie brackets: $[e_1,e_4]=e_1, [e_2,e_4]=e_1+e_2, [e_3,e_4]=e_2+e_3$.

\noindent Geodesics:

\begin{equation}
\begin{array}{l}
 \ddot{x}=(\dot{x}+\dot{y})\dot{w} \\
 \ddot{y}=(\dot{y}+\dot{z})\dot{w}\\
 \ddot{z}=\dot{z}\dot{w} \\
 \ddot{w}=0.
\end{array}
\end{equation}

\noindent Lie symmetries:
\begin{eqnarray*}
&&e_1= D_x,\,\,\, e_2=D_y,\,\,\, e_3=D_z,\,\,\,
e_4=zD_x,\,\,\, e_5= yD_x+zD_y, \,\,\, e_6=e^wD_x,\,\,\,\\ && e_{7}=e^w(wD_x+D_y),\,\,\,
e_{8}=e^w(\frac{w^2}{2}D_x+wD_y+D_z),\\ && e_9=D_t,e_{10}=wD_t,
e_{11}=tD_t,\,\,\, e_{12}=D_w,\,\,\, e_{13}=x D_x+yD_y+zD_z.
\end{eqnarray*}

\noindent The symmetry algebra is a $13$-dimensional indecomposable solvable algebra $\R^3 \rtimes (N_8\oplus \R^2)$. Its nilradical is $10$-dimensional decomposable, a
direct sum of an $8$-dimensional nilpotent $N_8$ spanned by $e_{1},e_{2},e_{3},e_{4},e_{5},e_{6},e_{7},e_{8}$ and $\R^2$ spanned by $e_{9},e_{10}$. The complement to the
nilradical is abelian spanned by $e_{11},e_{12},e_{13}$.\\






\subsection{$A_{4.5ab}$}

\noindent Lie brackets: $[e_1,e_4]=e_1, [e_2,e_4]=ae_2, [e_3,e_4]=be_3,\,\, (ab\neq0, \, -1 \leq a \leq b \leq 1)$.

\noindent Geodesics:
\begin{equation}
\begin{array}{l}
 \ddot{x}=\dot{x}\dot{w} \\
 \ddot{y}=a\dot{y}\dot{w}\\
 \ddot{z}=b\dot{z}\dot{w} \\
 \ddot{w}=0.
\end{array}
\end{equation}

\noindent Generic case Lie symmetries:

\[ \begin{array}{l}
e_1= D_x,\,\,\, e_2=D_y,\,\,\, e_3=D_z,\,\,\,  e_4=e^wD_x, e_5=e^{aw}D_y,
e_6=e^{bw}D_z,\,\,\, e_7= D_t, \,\,\,\\
e_8=wD_t,\,\,\, e_{9}=xD_x,\,\,\, e_{10}=yD_y
e_{11}=zD_z,\,\,\, D_{12}=tD_t, \,\,\, e_{13}=D_w.\\
\end{array} \]\\


\noindent It is a $13$-dimensional indecomposable solvable Lie algebra $\R^5 \rtimes \R^8$. It has an $8$-dimensional abelian
nilradical spanned by $e_{1},e_{2},e_{3},e_{4},e_{5},e_{6},e_{7},e_{8}$ and $5$-dimensional abelian complement spanned by $e_{9},e_{10},e_{11},e_{12},e_{13}$.\\


\noindent $A_{4.5ab\, (a=1,b=1)}$\\
\noindent Lie symmetries:
\[ \begin{array}{l}
e_1= D_x,\,\,\, e_2=D_y,\,\,\, e_3=D_z,\,\,\,  e_4=e^wD_x, e_5=e^wD_y, \,\,\, e_6=e^wD_z,
e_7=D_t,\,\,\, e_8= wD_t,\\
e_9=xD_x+yD_y+zD_z,\,\,\, e_{10}=tD_t,\,\,\, e_{11}=D_w, \,\,\, e_{12}=yD_x
e_{13}=xD_y,\,\,\, e_{14}=zD_y\\
e_{15}= yD_z,\,\,\, e_{16}=xD_z,\,\,\, e_{17}= zD_x,\,\,\,
e_{18}= xD_x-zD_z,\,\,\, e_{19}=yD_y-zD_z.
\end{array} \]


\noindent It is a $19$-dimensional indecomposable Lie algebra with a non-trivial Levi decomposition. The semi-simple part is $\s\l(3,\R)$ and is spanned by $e_{12},e_{13},e_{14},e_{15},e_{16},
e_{17},e_{18},e_{19}$. The radical is a semi-direct product $\R^8 \rtimes\R^3$ with abelian
nilradical spanned by $e_{1},e_{2},e_{3},e_{4},e_{5},e_{6},e_{7},e_{8}$ and $3$-dimensional abelian complement spanned by $e_{9},e_{10},e_{11}$.
This finding is in agreement with Theorem \ref{thm}.\\


\noindent $A_{4.5ab\,(a=1,b=-1)}$\\
\noindent Lie symmetries:
\[ \begin{array}{l}
e_1= D_x,\,\,\, e_2=D_y,\,\,\, e_3=D_z,\,\,\,  e_4=e^wD_x, e_5=e^wD_y, \,\,\, e_6=e^{-w}D_z,
e_7=D_t,\,\,\, e_8= wD_t, \,\,\, e_9=tD_t,\\
e_{10}=3D_w+xD_x+yD_y-2zD_z,\,\,\, e_{11}=xD_x+yD_y+zD_z, \,\,\,
e_{12}=ze^wD_x,\,\,\, e_{13}=ze^wD_y,\\
e_{14}=xe^{-w}D_z,\,\,\,
e_{15}=ye^{-w}D_y,\,\,\, e_{16}= yD_x,\,\,\, e_{17}= xD_y,\,\,\,
e_{18}=xD_x-zD_z,\,\,\,e_{19}=yD_y-zD_z.\\
\end{array} \]
\noindent It is a $19$-dimensional indecomposable Lie algebra with a non-trivial Levi decomposition. The semi-simple part is $\s\l(3,\R)$ and is spanned by
$e_{12},e_{13},e_{14},e_{15},e_{16},
e_{17},e_{18},e_{19}$. The radical is a semi-direct product $\R^8 \rtimes\R^3$ with abelian
nilradical spanned by $e_{1},e_{2},e_{3},e_{4},e_{5},e_{6},e_{7},e_{8}$ and $3$-dimensional abelian complement spanned by $e_{9},e_{10},e_{11}$.\\


\noindent $A_{4.5ab\, (a=1)}$\\
\noindent Lie symmetries:
\[ \begin{array}{l}
e_1= D_x,\,\,\, e_2=D_y,\,\,\, e_3=z,\,\,\,  e_4=e^wD_x, e_5=e^wD_y, \,\,\, e_6=e^{bw}D_z,
e_7=D_t,\,\,\, e_8= wD_t, \,\,\,e_9=D_w, \,\,\,\\e_{10}= xD_x+yD_y,\,\,\, e_{11}=zD_z,\,\,\, e_{12}=tD_t, \,\,\, e_{13}=xD_x-yD_y,\,
e_{14}=yD_x,\,\,\, e_{15}=xD_y.\\
\end{array} \]
\noindent It is a $15$-dimensional indecomposable Lie algebra with a non-trivial Levi decomposition. The semi-simple part is $\s\l(2,\R)$ and is spanned by
$e_{13},e_{14},e_{15}$. The radical is a semi-direct product $\R^4 \rtimes\R^8$ with abelian
nilradical spanned by $e_{1},e_{2},e_{3},e_{4},e_{5},e_{6},e_{7},e_{8}$ and $4$-dimensional abelian complement spanned by $e_{9},e_{10},e_{11},e_{12}$.\\

\noindent $A_{4.5ab\, (a=-1)}$

\noindent Lie symmetries:
\[ \begin{array}{l}
e_1= D_x,\,\,\, e_2=D_y,\,\,\, e_3=D_z,\,\,\,  e_4=e^wD_x, e_5=e^{-w}D_y, \,\,\, e_6=e^{bw}D_z,\\

e_7=D_t,\,\,\, e_8= tD_t, \,\,\, e_9=wD_t,\,\,\, e_{10}=2D_w+xD_x-yD_y,\,\,\, e_{11}=xD_x+yD_y, \,\,\, e_{12}=zD_z\\

e_{13}=xD_x-yD_y,\,\,\, e_{14}=ye^wD_x,\,\,\,e_{15}= xe^{-w}D_y.\\
\end{array} \]


\noindent It is a $15$-dimensional indecomposable Lie algebra with a non-trivial Levi decomposition. The semi-simple part is $\s\l(2,\R)$ and is spanned by
$e_{13},e_{14},e_{15}$. The radical is a semi direct product $\R^4 \rtimes\R^8$ with abelian
nilradical spanned by $e_{1},e_{2},e_{3},e_{4},e_{5},e_{6},e_{7},e_{8}$ and $4$-dimensional abelian complement spanned by $e_{9},e_{10},e_{11},e_{12}$.\\


\subsection{$A_{4.6ab}$}

Lie brackets: $[e_1,e_4]=ae_1, [e_2,e_4]=be_2-e_3, [e_3,e_4]=e_2+be_3\,\,(a \neq 0, b \geq 0)$.

\noindent Geodesics:
\begin{equation}
\begin{array}{l}
 \ddot{x}=(b\dot{x}+\dot{y})\dot{w} \\
 \ddot{y}=(b\dot{y}-\dot{x})\dot{w}\\
 \ddot{z}=a\dot{z}\dot{w} \\
 \ddot{w}=0.
\end{array}
\end{equation}

\noindent Lie symmetries:
\[ \begin{array}{l}
e_1= D_t,\,\,\, e_2=D_x,\,\,\, e_3=D_y,\,\,\,  e_4=D_z, e_5=e^{aw}D_z, \,\,\, e_6=e^{bw}(\sin w\,D_x+\cos w\,D_y),\\
e_7=e^{bw}(-\cos w\,D_x+\sin w\,D_y),\,\,\, e_8= wD_t, \,\,\, e_9=zD_z,\,\,\, e_{10}=-yD_x+xD_y,\,\,\,\\
e_{11}=xD_x+yD_y,e_{12}=tD_t, e_{13}=D_w.\\
\end{array} \]


\noindent The symmetry algebra is a $13$-dimensional indecomposable solvable algebra $\R^5 \rtimes \R^8$. Its nilradical is $8$-dimensional abelian and
spanned by $e_{1},e_{2},e_{3},e_{4},e_{5},e_{6},e_{7},e_8$. The complement to the nilradical is abelian and spanned by $e_9, e_{10}, e_{11}, e_{12}, e_{13}$.\\


\noindent $A_{4.6ab, (b=0)}$

\noindent Lie symmetries:
\[ \begin{array}{l}
e_1= D_y,\,\,\, e_2=D_x,\,\,\, e_3=D_z,\,\,\,  e_4=D_w,\,\,\, e_5=tD_t, \,\,\, e_6=D_t,\\
e_7=zD_z,\,\,\, e_8= wD_t, \,\,\, e_9=xD_x+yD_y,\,\,\, e_{10}=-yD_x+xD_y,\,\,\,\\ e_{11}=e^{aw}D_z,
e_{12}=\sin w\,D_x+\cos w\,D_y,\,\,\,e_{13}=-\cos w\,D_x+\sin w\,D_y,\\
e_{14}=(y\cos w\,+x\sin w\,)D_x+(x\cos w\,-y\sin w)D_y,\\
e_{15}=-(x\cos w\,-y\sin w)D_x+(y\cos w\,+x\sin w)D_y.\\
\end{array} \]



\noindent The symmetry algebra is a $15$-dimensional indecomposable algebra that has a non-trivial Levi decomposition.\\
The semi-simple part is $\s\l(2,\R)$ and is spanned by
$e_{13},e_{14},e_{15}$. The nilradical is abelian and is
spanned\\ by $e_{1},e_{2},e_{3},e_{4},e_{5},e_{6},e_{7},e_8$ and the complement to the nilradical is abelian and spanned by $e_9, e_{10}, e_{11}, e_{12}$.
Again the ``extra" symmetry for $b=0$ is explained by the fact that, for such a value, eq.(\ref{newwww}) has a non-trivial solution.\\

\end{document}